%

\documentclass[runningheads]{llncs}

\usepackage[english]{babel}
\usepackage[utf8]{inputenc}
\usepackage{amsmath}
\usepackage{amssymb}
\usepackage{color}
\usepackage{soul}
\usepackage{graphicx}
\usepackage{cite}

\newcommand{\defeq}{:=}

\newcommand{\norm}[1]{\|#1\|}

\DeclareMathOperator{\BV}{BV}
\DeclareMathOperator{\TV}{TV}
\DeclareMathOperator{\TGV}{TGV}
\DeclareMathOperator{\TVL}{TVL}
\DeclareMathOperator{\TVLp}{\TVL^p}
\newcommand{\TVpwL}{\TV_{pwL}}
\DeclareMathOperator{\divergence}{div}
\let\div\relax
\DeclareMathOperator{\div}{\divergence}

\newcommand{\R}{\mathbb R}

\newcommand{\Mg}{\mathfrak{M}}
\newcommand{\M}{\mathcal{M}}

\renewcommand{\leq}{\leqslant}
\renewcommand{\geq}{\geqslant}
\renewcommand{\phi}{\varphi}

\newcommand{\reg}{\mathcal J}

\newcommand{\C}{\mathcal{C}}
\renewcommand{\L}{\mathcal{L}}

\providecommand{\keywords}[1]{\textbf{Keywords: } #1}


\graphicspath{{../pic/}}


\renewcommand{\hl}[1]{#1}

%
%

\begin{document}
\title{A total variation based regularizer promoting piecewise-Lipschitz reconstructions}
\titlerunning{A $\TV$-based regularizer promoting piecewise-Lipschitz reconstructions}
%

\author{Martin Burger$^1$, Yury Korolev$^2$, Carola-Bibiane Sch\"onlieb$^2$ and Christiane Stollenwerk$^3$}


\authorrunning{M. Burger et al.}
%

\institute{Department Mathematik, University of Erlangen-N\"urnberg, Cauerstr. 11, 91058  Erlangen, Germany. \email{martin.burger@fau.de} \and
Department of Applied Mathematics and Theoretical Physics, University of Cambridge, Wilberforce Road, Cambridge CB3 0WA, UK. \email{\{yk362,cbs31\}@cam.ac.uk}
\and Institute for Analysis and Numerics, University of M\"unster, Einsteinstr. 62, 48149 M\"unster, Germany. \email{ChristianeStollenwerk@web.de}}

\maketitle              
\begin{abstract}
We introduce a new regularizer in the total variation family that promotes reconstructions with a given Lipschitz constant (which can also vary spatially). We prove regularizing properties of this functional and investigate its connections to total variation and infimal convolution type regularizers $\TVLp$ and, in particular, establish topological equivalence. Our numerical experiments show that the proposed regularizer can achieve similar performance as total generalized variation while having the advantage of a very intuitive interpretation of its free parameter, which is just a local estimate of the norm of the gradient. It also provides a natural approach to spatially adaptive regularization.

\keywords{Total Variation, Total Generalized Variation, First order regularization, Image denoising.}
\end{abstract}
%
%
\section{Introduction}
Since it has been introduced in~\cite{ROF}, total variation ($\TV$) has been popular in image processing due to its ability to preserve edges while imposing sufficient regularity on the reconstructions. There have been numerous works studying the  geometric structure of $\TV$-based reconstructions (e.g.,~\cite{Ring:2000, Jalalzai:2016, Peyre:2017, iglesias_mercier_scherzer:2018, MB_YK_JL:2018}). A typical characteristic of these reconstructions is the so-called \emph{staircasing}~\cite{Ring:2000, Jalalzai:2016}, which refers to piecewse-constant reconstructions with jumps that are not present in the ground truth image.
To overcome the issue of staircasing, many other $\TV$-type regularizers have been proposed, perhaps the most successful of which being the Total Generalized Variation ($\TGV$)~\cite{bredies2009tgv, sampta2011tgv}. $\TGV$ uses derivatives of higher order and favours reconstructions that are piecewise-polynomial; in the most common case of $\TGV^2$ these are piecewise-affine. 

While $\TGV$ greatly improves the reconstruction quality compared to $\TV$, the fact that it uses second order derivatives \hl{typically results in slower convergence of iterative optimization algorithms and therefore increases computational costs of the reconstruction}. Therefore, there has been an effort to achieve a performance similar to that of $\TGV$ with a first-order method (i.e. a method that only uses derivatives of the first order). In~\cite{Burger_TVLp_2016, Burger_TVLp_pt2_2016}, infimal convolution type regularizers $\TVLp$ have been introduced that use an infimal convolution of the Radon norm and an $L^p$ norm applied to the weak gradient  of the image. For an $u\in L^1(\Omega)$, $\TVLp$ is defined as follows
\begin{equation*}
\TVLp_{\alpha,\beta}(u) \defeq \min_{w\in L^p(\Omega;\R^d)}\alpha||Du-w||_{\Mg}+\beta||w||_{L^p(\Omega;\R^d)}.
\end{equation*}
\noindent where $D$ is the weak gradient, $\alpha,\beta>0$ are constants and $1<p \leq \infty$. It was shown that for $p=2$ the reconstructions are piecewise-smooth, while for $p=\infty$ they somewhat resemble those obtained with $\TGV$. 

The regularizer we introduce in the current paper also aims at achieving a similar performance with second-order methods while only relying on first order derivatives. It can be seen either as a relaxiation of $\TV$ obtained by extending its kernel from constants to all functions with a given Lipschitz constant (for this reason, we call this new regularizer $\TVpwL$, with `$pwL$' standing for `piecewise-Lipschitz'), or as an infimal convolution type regularizer, where the Radon norm is convolved with the characteristic function of a certain convex set.

We start with the following motivation. Let $\Omega \subset \R^2$ be a bounded Lipschitz domain and $f \in L^2(\Omega)$ a noisy image. Recall the ROF~\cite{ROF} denoising model \looseness=-1
\begin{eqnarray}\nonumber
\min_{u \in BV(\Omega)} \frac{1}{2}\norm{u-f}^2_{L^2(\Omega)}+\alpha \norm{Du}_{\Mg},
\end{eqnarray}
\noindent where $D \colon L^1(\Omega) \to \Mg(\Omega,\R^2)$ is the weak gradient, $\Mg(\Omega,\R^2)$ is the space of vector-valued Radon measures and $\alpha > 0$ is the regularization parameter. 
Introducing an auxiliary variable $g\in\Mg(\Omega,\R^2)$, we can rewrite this problem as follows \looseness=-1
\begin{eqnarray}
\min_{\substack{u \in BV(\Omega)\\g\in\Mg(\Omega,\R^2)}}~\frac{1}{2}\norm{u-f}^2_{L^2(\Omega)}+\alpha \norm{g}_{\Mg}\qquad s.t.~Du=g.\nonumber
\end{eqnarray}
Our idea is to relax the constraint on $Du$ as follows
\begin{eqnarray}
\min_{\substack{u \in BV(\Omega)\\g\in\Mg(\Omega,\R^2)}}~\frac{1}{2}\norm{u-f}^2_{L^2(\Omega)}+\alpha \norm{g}_{\Mg}\qquad s.t.~|Du-g|\leq\gamma\nonumber
\end{eqnarray}
\noindent for some positive constant, function or measure $\gamma$. Here $|Du - g|$ is the variation measure corresponding to $Du-g$ and the symbol $"\leq"$ denotes a partial order in the space of signed (scalar valued) measures $\M(\Omega)$. This problem is equivalent to \looseness=-1
\begin{eqnarray}\label{eq:tvpwl_inspo}
\min_{\substack{u \in BV(\Omega)\\g\in\Mg(\Omega,\R^2)}}~\frac{1}{2}\norm{u-f}^2_{L^2(\Omega)}+\alpha \norm{Du-g}_{\Mg}\quad s.t.~|g|\leq\gamma,
\end{eqnarray}
which we take as the starting point of our approach.

This paper is organized as follows.  
In Section~\ref{sec:primal_and_dual} we introduce the primal and dual formulations of the $\TVpwL$ functional and prove their equivalence. 
In Section~\ref{sec:prop} we prove some basic properties of $\TVpwL$ and study its relationship with other $\TV$-type regularizers. Section~\ref{sec:numerics} contains numerical experiments with the proposed regularizer. \looseness=-1

%
%
%

\section{Primal and Dual Formulations}\label{sec:primal_and_dual}
Let us first clarify the notation of the inequality for signed measures in~\eqref{eq:tvpwl_inspo}.
\begin{definition}\label{def:tvpwl_ordvecsp}
We call a  measure $\mu \in \M(\Omega)$ positive if for every subset $E\subseteq \Omega$ one has $\mu(E)\geq 0$. For two signed measures $\mu_1,\mu_2 \in \M(\Omega)$ we say that $\mu_1\leq\mu_2$ if $\mu_2-\mu_1$ is a positive measure.
\end{definition}


Now let us  formally define the new regularizer.
\begin{definition}\label{def:tvpwl_primal}
Let $\Omega\subset\R^2$ be a bounded Lipschitz domain, $\gamma\in\M(\Omega)$ be a finite positive measure. For any $u\in L^1(\Omega)$ we define
\begin{eqnarray}
\TVpwL^\gamma(u):=\inf_{g\in\Mg(\Omega,\R^2)}\norm{Du-g}_{\Mg} 
\quad \text{s.t. $|g|\leq\gamma$}\nonumber
\end{eqnarray}
where $||\cdot||_{\Mg}$ denotes the Radon norm and $|g|$ is the variation  measure~\cite{Bredies_Lorenz} corresponding to $g$, i.e. for any subset $E \subset \Omega$
\begin{equation*}
|g|(E) \defeq \sup\left\{\sum_{i=1}^\infty \norm{g(E_i)}_2 \mid E = \bigcup_{i \in \mathbb N} E_i, \,\, \text{$E_i$ pairwise disjoint}  \right\}
\end{equation*}
\noindent  (see also the polar decomposition of measures~\cite{Ambrosio}).
\end{definition}

\hl{The $\inf$ in Definition{~\ref{def:tvpwl_primal}} can actually be replaced by a $\min$ since we are dealing with a metric projection onto a closed convex set in the dual of a separable Banach space. We also note that for $\gamma=0$ we immediately recover total variation.}

We defined the regularizer $\TVpwL^\gamma$ in the general case of $\gamma$ being a measure, but we can also choose $\gamma$ to be a Lebesgue-measurable function or a constant. In this case the inequality is understood in the sense $|g| \leq\gamma~d\L$ where $\L$ is the Lebesgue measure, resulting in $|g|$ being absolutely continuous with respect to $\L$. We will not distinguish between these cases in what follows and just write $|g| \leq\gamma$. \looseness=-1

As with standard $\TV$, there is also an equivalent dual formulation of $\TVpwL$.
\vspace{-5mm}
\begin{theorem}\label{thm:tvpwl_dual}
Let $\gamma\in \M(\Omega)$ be a positive finite measure and $\Omega$ a bounded Lipschitz domain. Then for any $u\in L^1(\Omega)$ the $\TVpwL^{\gamma}$ functional can be equivalently expressed as follows
\begin{eqnarray}
\TVpwL^{\gamma}(u)~=~\sup_{\substack{\phi\in \C_0^{\infty}(\Omega;\R^2) \\ |\phi|_2 \leq 1}}~\left\{ \int_{\Omega} u~\div~\phi~dx-\int_{\Omega} |\phi|_2 d\gamma \right\},\nonumber
\end{eqnarray}
\noindent where $|\phi|_2$ denotes the pointwise $2$-norm of $\phi$.
\end{theorem}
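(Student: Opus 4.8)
The statement is a Fenchel-type duality: the primal $\TVpwL^\gamma(u)$ is an infimum of $\norm{Du-g}_{\Mg}$ over the convex set $\{|g|\le\gamma\}$, and we want to identify it with a supremum over smooth test fields. The natural approach is to write everything as a convex optimization problem over measures and apply Fenchel--Rockafellar duality (or, equivalently, a minimax exchange via Sion's theorem). First I would fix $u\in L^1(\Omega)$ and, using the standard distributional definition, write $\norm{Du-g}_{\Mg}=\sup\{\int_\Omega u\,\div\phi\,dx-\int_\Omega \phi\cdot dg : \phi\in\C_0^\infty(\Omega;\R^2),\ |\phi|_2\le 1\}$, valid for $u\in BV$; for $u\notin BV$ both sides of the theorem are $+\infty$, so that case is handled separately (the constraint $|g|\le\gamma$ with $\gamma$ finite does not allow $Du-g$ to be a finite measure unless $Du$ is). Substituting this into the definition gives
\begin{eqnarray}
\TVpwL^\gamma(u)=\inf_{|g|\le\gamma}\ \sup_{|\phi|_2\le 1}\left\{\int_\Omega u\,\div\phi\,dx-\int_\Omega \phi\cdot dg\right\}.\nonumber
\end{eqnarray}

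**Exchanging inf and sup.** The core of the proof is to swap the order of the infimum and supremum. The inner function $(g,\phi)\mapsto \int_\Omega u\,\div\phi\,dx-\int_\Omega\phi\cdot dg$ is affine (hence convex) in $g$ and affine (hence concave) in $\phi$. The constraint set for $g$, namely $\{g\in\Mg(\Omega;\R^2):|g|\le\gamma\}$, is convex and weak-* compact: it is bounded in $\Mg$ since $|g|(\Omega)\le\gamma(\Omega)<\infty$, and weak-* closed because $|g|\le\gamma$ is preserved under weak-* limits (test against nonnegative continuous functions after the polar decomposition). The constraint set for $\phi$, the unit ball of $\C_0^\infty(\Omega;\R^2)$ in the sup-norm on $|\phi|_2$, is convex. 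By Sion's minimax theorem — using weak-* compactness and weak-* continuity of the linear functional $g\mapsto\int\phi\cdot dg$ on the $g$-side — we may interchange:
\begin{eqnarray}
\TVpwL^\gamma(u)=\sup_{|\phi|_2\le 1}\ \inf_{|g|\le\gamma}\left\{\int_\Omega u\,\div\phi\,dx-\int_\Omega\phi\cdot dg\right\}.\nonumber
\end{eqnarray}

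**Evaluating the inner infimum.** For fixed $\phi$, minimizing $-\int_\Omega\phi\cdot dg$ over $\{|g|\le\gamma\}$ is an explicit pointwise problem. Writing $g=\sigma_g\,|g|$ via polar decomposition, the inner infimum is $-\sup_{|g|\le\gamma}\int_\Omega\phi\cdot dg$, and the optimal choice aligns $g$ with $\phi$ and saturates the constraint, giving $\sup_{|g|\le\gamma}\int_\Omega\phi\cdot dg=\int_\Omega|\phi|_2\,d\gamma$. (Rigorously: for any $g$ with $|g|\le\gamma$, $\int\phi\cdot dg\le\int|\phi|_2\,d|g|\le\int|\phi|_2\,d\gamma$ since $|\phi|_2\ge0$; conversely take $dg=\tfrac{\phi}{|\phi|_2}\,d\gamma$ where $|\phi|_2>0$ and $dg=0$ otherwise.) Substituting this back yields exactly
\begin{eqnarray}
\TVpwL^\gamma(u)=\sup_{\substack{\phi\in\C_0^\infty(\Omega;\R^2)\\|\phi|_2\le1}}\left\{\int_\Omega u\,\div\phi\,dx-\int_\Omega|\phi|_2\,d\gamma\right\},\nonumber
\end{eqnarray}
which is the claim.

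**Main obstacle.** The delicate point is the minimax interchange: one must verify the hypotheses of Sion's theorem carefully, in particular the weak-* compactness of $\{|g|\le\gamma\}$ and the continuity of $g\mapsto\int_\Omega\phi\cdot dg$ in the weak-* topology (which holds because $\phi\in\C_0^\infty\subset C_0(\Omega;\R^2)$). An alternative that avoids Sion is to set up the primal problem in the Fenchel--Rockafellar framework with operator $\div$ acting on $\phi$ and the indicator of $\{|g|\le\gamma\}$, compute the two conjugates explicitly (the conjugate of the indicator being the support function $\phi\mapsto\int|\phi|_2\,d\gamma$), and check a constraint qualification — e.g.\ that $g=0$ is feasible with $0$ in the interior of the relevant domain, which holds since $\gamma$ is a positive measure. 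Either route requires only that $\gamma(\Omega)<\infty$; the reduction of the non-$BV$ case is routine once one notes $\norm{Du-g}_\Mg=+\infty$ there for every admissible $g$.
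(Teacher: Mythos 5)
Your proposal is correct and follows essentially the same route as the paper's proof: express $\norm{Du-g}_{\Mg}$ via its dual (Riesz--Markov) representation, swap $\inf$ and $\sup$ by a minimax theorem exploiting convexity and weak-* compactness (boundedness and closedness) of $\{g : |g|\leq\gamma\}$ in $\Mg(\Omega,\R^2)=(\C_0(\Omega,\R^2))^*$ (the paper cites the nonsymmetrical minimax theorem of Borwein--Zhu where you invoke Sion), and then compute the support function of that set as $\int_\Omega |\phi|_2\,d\gamma$. Your explicit treatment of the non-$\BV$ case and your working with $\C_0^\infty$ test fields from the start (rather than passing from $\C_0$ to $\C_0^\infty$ at the end, as the paper does) are only minor cosmetic differences.
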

\begin{proof}
Since by the Riesz-Markov-Kakutani representation theorem the space of vector valued Radon measures $\Mg(\Omega,\R^2)$ is the dual of the space $\C_0(\Omega,\R^2)$, we 
 can rewrite the expression in Definition~\ref{def:tvpwl_primal} as follows
\begin{eqnarray}
\TVpwL^{\gamma}(u)=\inf_{\substack{g\in\Mg(\Omega,\R^2)\\|g|\leq\gamma}}\norm{Du-g}_{\Mg} 
=\inf_{\substack{g\in\Mg(\Omega,\R^2)\\|g|\leq\gamma }}\sup_{\substack{\phi\in\C_0(\Omega;\R^2)\\|\phi|_2 \leq 1}}(Du-g,\phi).\nonumber
\end{eqnarray}

In order to exchange $\inf$ and $\sup$, we need to apply a minimax theorem. In our setting  we can use the Nonsymmetrical Minimax Theorem from \cite[Th. 3.6.4]{Borwein_Zhu}.
 Since the set $\{g~|~|g|\leq\gamma\}\subset\Mg(\Omega,\R^2)=(\C_0(\Omega,\R^2))^*$ is bounded,  
convex and closed 
and the set \pagebreak
$\{\phi~|~\norm{\phi}_{2,\infty} \leq 1\}\subset\C_0(\Omega,\R^2)$ is convex, we can  swap the infimum and the  supremum and obtain the following representation
\begin{eqnarray*}
& \TVpwL^{\gamma}(u) = \sup\limits_{\substack{\phi\in\C_0(\Omega;\R^2)\\|\phi|_2 \leq 1}}\inf\limits_{\substack{g\in\Mg(\Omega,\R^2)\\|g|\leq\gamma }}( Du-g,\phi)\nonumber\\
& = \sup\limits_{\substack{\phi\in\C_0(\Omega;\R^2)\\|\phi|_2 \leq 1}}[( Du,\phi)-\sup\limits_{\substack{g\in\Mg(\Omega,\R^2)\\|g|\leq\gamma }}(g,\phi)]
 = \sup\limits_{\substack{\phi\in\C_0(\Omega;\R^2)\\|\phi|_2 \leq 1}} [(Du,\phi)-(\gamma ,|\phi|_2)].\nonumber\\
\end{eqnarray*}
\hl{Noting that the supremum can actually be taken over $\phi\in\C_0^\infty(\Omega;\R^2)$, we obtain}
\begin{equation*}
 \TVpwL^{\gamma}(u) = \sup\limits_{\substack{\phi\in\C_0^\infty(\Omega;\R^2)\\|\phi|_2 \leq 1}}[(u, -\div \phi) - (\gamma,|\phi|_2)]
\end{equation*}
\noindent which yields the assertion upon replacing $\phi$ with $-\phi$.
\end{proof}

\section{Basic Properties and Relationship with other $\TV$-type regularizers}\label{sec:prop}

\paragraph*{Influence of $\gamma$.} 
It is evident from Definition~{\ref{def:tvpwl_primal}} that a larger $\gamma$ yields a larger feasible set and a smaller value of $\TVpwL$. Therefore, $\TVpwL^{\gamma}\geq \TVpwL^{\bar{\gamma}}$ whenever $0\leq\gamma\leq\bar{\gamma}$. In particular, we get that $\TVpwL^{\gamma} \leq \TVpwL^0 = \TV$ for any $\gamma \geq 0$. 



\vspace{-3mm}
\paragraph*{Lower-semicontinuity and convexity.} Lower-semicontinuity is clear from Definition~{\ref{def:tvpwl_primal}} if we recall that the infimum is actually a minimum. Convexity follows from the fact that $\TVpwL$ is an infimal convolution of two convex functions. 

\vspace{-3mm}
\paragraph*{Absolute one-homogeneity.} Noting that $\TVpwL$ is the distance from the convex set $\{g\in\Mg(\Omega;\R^2)~|~|g| \leq\gamma\}$, we conclude that it is absolute one-homogeneous if and only if this set consists of just zero, i.e. when $\gamma = 0$ and $\TVpwL = \TV$. \looseness=-1 

%
%


\vspace{-3mm}
\paragraph*{Coercivity.} We have seen that $\TVpwL^\gamma \leq \TV$ for any $\gamma \geq 0$, i.e. $\TVpwL$ is a lower bound for $\TV$. If $\gamma(\Omega)$ is finite, the converse inequality (up to a constant) also holds and we obtain topological equivalence of $\TVpwL$ and $\TV$. 
\begin{theorem}\label{th:tvpwl_topequiv}
Let $\Omega\subset \mathbb{R}^2$ be a bounded Lipschitz domain and $\gamma\in \M(\Omega)$  a positive finite measure. For every $u\in L^1(\Omega)$ we obtain the following relation:
\begin{eqnarray}
TV(u)-\gamma(\Omega)\leq \TVpwL^{\gamma}(u)\leq TV(u).\nonumber
\end{eqnarray}
\end{theorem}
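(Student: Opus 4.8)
The plan is to prove the two inequalities separately. The upper bound $\TVpwL^\gamma(u)\leq \TV(u)$ has already been recorded in the paragraph on the influence of $\gamma$ (it follows from monotonicity in $\gamma$ together with $\TVpwL^0=\TV$, or simply by using $g=0$ as a competitor in Definition~\ref{def:tvpwl_primal}), so only the lower bound $\TV(u)-\gamma(\Omega)\leq\TVpwL^\gamma(u)$ needs an argument.

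For the lower bound I would argue directly on the primal side. Fix an arbitrary competitor $g\in\Mg(\Omega;\R^2)$ with $|g|\leq\gamma$. If $u\notin\BV(\Omega)$ then $Du$ is not a finite measure, hence neither is $Du-g$, so $\norm{Du-g}_{\Mg}=+\infty$ and the inequality is trivial (both sides are $+\infty$); thus assume $u\in\BV(\Omega)$. By the triangle inequality for the Radon norm, $\TV(u)=\norm{Du}_{\Mg}\leq\norm{Du-g}_{\Mg}+\norm{g}_{\Mg}$. The key point is to control the last term: since $\norm{g}_{\Mg}=|g|(\Omega)$ and $|g|\leq\gamma$ in the sense of Definition~\ref{def:tvpwl_ordvecsp}, positivity of the measure $\gamma-|g|$ evaluated on $\Omega$ gives $|g|(\Omega)\leq\gamma(\Omega)$, which is finite by hypothesis. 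Hence $\TV(u)\leq\norm{Du-g}_{\Mg}+\gamma(\Omega)$, and taking the infimum over all admissible $g$ yields $\TV(u)\leq\TVpwL^\gamma(u)+\gamma(\Omega)$, as claimed.

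Alternatively, the same estimate drops out of the dual formulation of Theorem~\ref{thm:tvpwl_dual}: for every $\phi\in\C_0^\infty(\Omega;\R^2)$ with $|\phi|_2\leq1$ one has $\int_\Omega|\phi|_2\,d\gamma\leq\gamma(\Omega)$ because $\gamma$ is positive, so each term in the supremum defining $\TVpwL^\gamma(u)$ is bounded below by the corresponding term $\int_\Omega u\,\div\phi\,dx$ in the dual representation of $\TV(u)$ minus the constant $\gamma(\Omega)$; passing to the supremum over $\phi$ gives $\TVpwL^\gamma(u)\geq\TV(u)-\gamma(\Omega)$. There is no real obstacle here: the only points requiring a word of care are the degenerate case $u\notin\BV(\Omega)$ and the observation that finiteness of $\gamma(\Omega)$ is precisely what makes the constant in the estimate meaningful — this is the sole place the hypothesis on $\gamma$ enters.
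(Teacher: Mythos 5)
Your argument is correct and is essentially the paper's own proof: the triangle inequality $\norm{Du}_{\Mg}\leq\norm{Du-g}_{\Mg}+\norm{g}_{\Mg}$ together with $\norm{g}_{\Mg}=|g|(\Omega)\leq\gamma(\Omega)$, followed by taking the infimum over admissible $g$ (the upper bound being already established from monotonicity in $\gamma$). Your extra remarks --- the explicit treatment of $u\notin\BV(\Omega)$ and the alternative derivation from the dual formulation of Theorem~\ref{thm:tvpwl_dual} --- are correct but not needed beyond what the paper does.
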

\begin{proof}
We already established the right inequality. For the left one we observe that for any $g\in\Mg(\Omega,\R^2)$ such that $|g|\leq\gamma$ the following estimate  holds
\begin{equation*}
\norm{Du-g}_{\Mg} \geq \norm{Du}_{\Mg} - \norm{g}_{\Mg} \geq \norm{Du}_{\Mg} - \norm{\gamma}_{\Mg} = \TV(u) - \gamma(\Omega),
\end{equation*}
\noindent which also holds for the infimum over $g$.
\end{proof}

\hl{The left inequality in Theorem~{\ref{th:tvpwl_topequiv}} ensures that $\TVpwL$ is coercive on $\BV_0$, since $\TV(u_n) \to \infty$ implies $\TVpwL^\gamma(u_n) \geq \TV(u_n) - \gamma(\Omega) \to \infty$. Upon adding  the $L^1$ norm, we also get coercivity on $\BV$. This ensures that $\TVpwL$ can be used for regularisation of inverse problems in the same scenarios as $\TV$.}

\hl{Topological equivalence between $\TVpwL$ and $\TV$ is understood in the sense that if one is bounded then the other one is too. Being not absolute one-homogeneous, however, $\TVpwL$ cannot be an equivalent norm on $\BV_0$.}

\vspace{-3mm}
\paragraph*{Null space.}
We will study the null space in the case when $\gamma \in L^\infty_+(\Omega)$ is a Lebesgue measurable function and the inequality $|g| \leq \gamma$ in Definition~\ref{def:tvpwl_primal} is understood in the sense that $|g| \leq \gamma d\L$ with $\L$ being the Lebesgue measure.

\begin{proposition}\label{prop:tvpwl_nullspace}
Let $u\in L^1(\Omega)$ and $\gamma \geq 0$ be an $L^\infty$ function. Then $\TVpwL^{\gamma}(u)=0$ if and only if the weak derivative $Du$ is absolutely continuous with respect to the Lebesgue measure and its $2$-norm is bounded by $\gamma$ a.e.
\end{proposition}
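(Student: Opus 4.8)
The plan is to prove the two implications separately; in both directions the argument reduces to the observation that $\TVpwL^\gamma(u)=0$ forces the weak gradient $Du$ to be itself an admissible competitor in Definition~\ref{def:tvpwl_primal}, i.e. $Du$ lies in the feasible set $\{g\in\Mg(\Omega;\R^2):|g|\leq\gamma\, d\L\}$.

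For the ``if'' direction, suppose $Du$ is absolutely continuous with respect to $\L$ with density $h\in L^1(\Omega;\R^2)$ satisfying $|h|_2\leq\gamma$ a.e. Then $g\defeq Du$ is feasible, since by the polar decomposition of measures $|g|=|Du|=|h|_2\,d\L\leq\gamma\,d\L$; inserting this $g$ into the definition gives $\TVpwL^\gamma(u)\leq\norm{Du-g}_{\Mg}=0$.

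For the ``only if'' direction, I use the remark following Definition~\ref{def:tvpwl_primal} that the infimum is attained: choose $g^*\in\Mg(\Omega;\R^2)$ with $|g^*|\leq\gamma\,d\L$ and $\norm{Du-g^*}_{\Mg}=\TVpwL^\gamma(u)=0$. Hence $Du=g^*$ as $\R^2$-valued measures, so $|Du|=|g^*|\leq\gamma\,d\L$. In particular, $\L(E)=0$ implies $|Du|(E)\leq\int_E\gamma\,d\L=0$ and thus $Du(E)=0$, so $Du\ll\L$, and the Radon--Nikodym theorem yields a density $h\in L^1(\Omega;\R^2)$ with $Du=h\,d\L$. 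Finally $|h|_2\,d\L=|Du|\leq\gamma\,d\L$ forces $|h|_2\leq\gamma$ $\L$-a.e., which is what we wanted. (If one prefers not to invoke attainment of the infimum: a minimizing sequence $g_n$ converges to $Du$ in the Radon norm, and the feasible set is norm-closed because $|Du|(E)\leq|g_n|(E)+\norm{Du-g_n}_{\Mg}\leq\int_E\gamma\,d\L+\norm{Du-g_n}_{\Mg}$ for every Borel set $E$, so letting $n\to\infty$ shows $Du$ is feasible.)

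I do not expect a serious obstacle here; this is essentially the easiest statement in the section. The only points deserving a little care are the use of the polar decomposition together with Radon--Nikodym to pass between ``$Du$ a vector measure dominated by $\gamma\,d\L$'' and ``$Du$ has an $L^1$ density with Euclidean norm $\leq\gamma$ a.e.'', and — should one avoid the ``$\inf=\min$'' remark — the norm-closedness of the feasible set sketched above.
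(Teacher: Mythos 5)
Your proof is correct and follows essentially the same route as the paper: attainment of the infimum gives $Du=\tilde g$ with $|Du|\leq\gamma\,d\L$, and Radon--Nikodym then yields an $L^1$ density with $2$-norm bounded by $\gamma$ a.e. You are in fact slightly more thorough than the paper, which only spells out the ``only if'' direction and takes the converse (feasibility of $Du$ itself) as implicit; your optional closedness argument avoiding the ``$\inf=\min$'' remark is a correct, minor addition.
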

\begin{proof}
We already noted that, since the space $\Mg(\Omega;\R^2)$ is the dual of the separable Banach space $C_0(\Omega,\R^2)$, the infimum in Definition~\ref{def:tvpwl_primal} is actually a minimum and there exists a $\tilde{g}\in \Mg(\Omega;\R^2)$ with $|g| \leq \gamma d\L$ such that 
\begin{equation*}
\TVpwL^{\gamma}(u)=\norm{Du-\tilde{g}}_{\Mg}.
\end{equation*}
If $\TVpwL^{\gamma}(u)=0$,  then $Du=\tilde{g}$ and therefore $0 \leq |Du|\leq \gamma~d\L$, which implies that $|Du|$ is absolutely continuous with respect to $\L$ and can be written as 
\begin{eqnarray}
|Du|(A)=\int_{A}f~d\L\nonumber
\end{eqnarray}
for any $A\subseteq \Omega$. The function $f\in L^1_+(\Omega)$ is the Radon-Nikodym derivative $\frac{d|Du|}{d\L}$. 
From the condition $|Du|\leq\gamma~d\L$ we immediately get that $f \leq \gamma$ a.e.
\end{proof}

\begin{remark}
We notice that the set $\{u \in L^1(\Omega) \colon \TVpwL(u) = 0\}$ is not a linear subspace, therefore, we should rather speak of the null set than the null space.
\end{remark}

\begin{remark}
Proposition~\ref{prop:tvpwl_nullspace} implies that all functions in the null set are Lipschitz continuous with (perhaps, spatially varying) Lipschitz constant $\gamma$; hence the name of the regularizer.
\end{remark}

\vspace{-3mm}
\paragraph*{Luxemburg norms.}  \hl{For a positive convex nondecreasing function $\phi \colon \R_+ \to \R_+$ with $\phi(0)=0$ the Luxemburg norm $\norm{\cdot}_\phi$ is defined as follows} \cite{Luxemburg:1955}
\begin{equation*}
\norm{u}_\phi = \sup\left\{\lambda > 0 \colon \int \phi(|u| / \lambda)\, d\mu \leq 1 \right\}.
\end{equation*}
 \hl{We point out a possible connection between $\TVpwL$ and a Luxemburg norm corresponding to $\phi(x) = (x-c)_+$ with a suitable constant $c>0$. However, we do not investigate this connection in this paper.}\looseness=-1


%



\vspace{-3mm}
\paragraph*{Relationship with Infimal Convolution Type regularizers.} We would like to highlight a relationship to infimal convolution type regularizers  $\TVLp$~\cite{Burger_TVLp_2016, Burger_TVLp_pt2_2016}. Indeed, 
as already noticed earlier, $\TVpwL$ can be written as an infimal convolution
\begin{eqnarray}\label{eq:tvpwl_infconv}
TV^{\gamma}_{pwL}(u)&=&\inf_{\substack{g\in\Mg(\Omega;\R^2)\\|g|\leq\gamma}}\norm{Du-g}_{\M} = \inf_{g\in\Mg(\Omega,\R^2)}\{\norm{Du-g}_{\Mg}+\chi_{C_{\gamma}}(g)\},
\end{eqnarray}
where $C_{\gamma}:=\{\eta\in\Mg(\Omega;\R^2)~|~|\eta|_p\leq\gamma\}$. If $\gamma>0$ is a constant, we obtain a bound on the $2,\infty$-norm of $g$. This highlights a connection to the $\TVL^\infty$ regularizer~\cite{Burger_TVLp_pt2_2016}: for any particular weight in front of the $\infty$-norm in $\TVL^\infty$  (and for a given $u$), 
the auxiliary variable $g$ will have some value of the $\infty$-norm and if we use this value as $\gamma$ in $\TVpwL^\gamma$, we will obtain the same reconstruction. 
The important difference is that with $\TVL^\infty$ we don't have direct control over this value and can only influence it in an indirect way through the weights in the regularizer. \pagebreak
 In $\TVpwL$ this parameter  is given explicitly and can be either  obtained using additional a priori information about the ground truth or estimated from the noisy image. \looseness=-1

Similar arguments can be made in the case of spatially variable $\gamma$ if the weighting in  $\TVL^\infty$ is also allowed to vary spatially.

\section{Numerical Experiments}\label{sec:numerics}
In this section we want to compare the performance of the proposed first order regularizer with a second order regularizer, $\TGV$, in image denosing. We consider images (or 1D signals) corrupted by Gaussian noise with a known variance and use the residual method~\cite{engl:1996} to reconstruct the noise-free image, i.e. we solve (in the discrete setting)
\begin{eqnarray}\label{eq:num_tv_modelnoise}
\min_{u \in \R^N}\reg(u) \quad \text{s.t. $\norm{u-f}^2_2 \leq \sigma^2 \cdot N$},
\end{eqnarray}
\noindent where $f$ is the noisy image, $\reg$ is the regularizer ($\TVpwL$ or $\TGV$), $\sigma$ is the standard deviation of the Gaussian noise and $N$ is the number of pixels in the image (or the number of entries in the 1D signal). We solve all problems in  MATLAB using CVX~\cite{cvx}. For $\TGV$ we use the parameter $\beta = 1.25$, which is in the range $[1,1.5]$ recommended in~\cite{DlosR_CS_TV_bilevel:2018}.

A characteristic feature of the proposed regularizer $\TVpwL$ is its ability to efficiently encode the information about the gradient of the ground truth (away from jumps) if such information is available. Our experiments showed that the quality of the reconstruction significantly depends on the quality of the (local) estimate of the norm of the gradient of the ground truth. 

The ideal application for $\TVpwL$ would be one where we have a good estimate of the gradient of the ground truth away from jumps, which, however, may occur at unknown locations and be of unknown magnitude. If such an estimate is not available, we can roughly estimate the gradient of the ground truth from the noisy signal, which is the approach we take.

\subsection{1D experiments.}

We consider the ground truth shown in Figure~\ref{fig:1D}a (green dashed line). The signal is discretized using $N = 1000$ points.  We add Gaussian noise with variance $\sigma = 0.1$ and obtain the noisy signal shown in the same Figure (blue solid line).

\begin{figure}[t]
\centering
\includegraphics[width=0.9\textwidth]{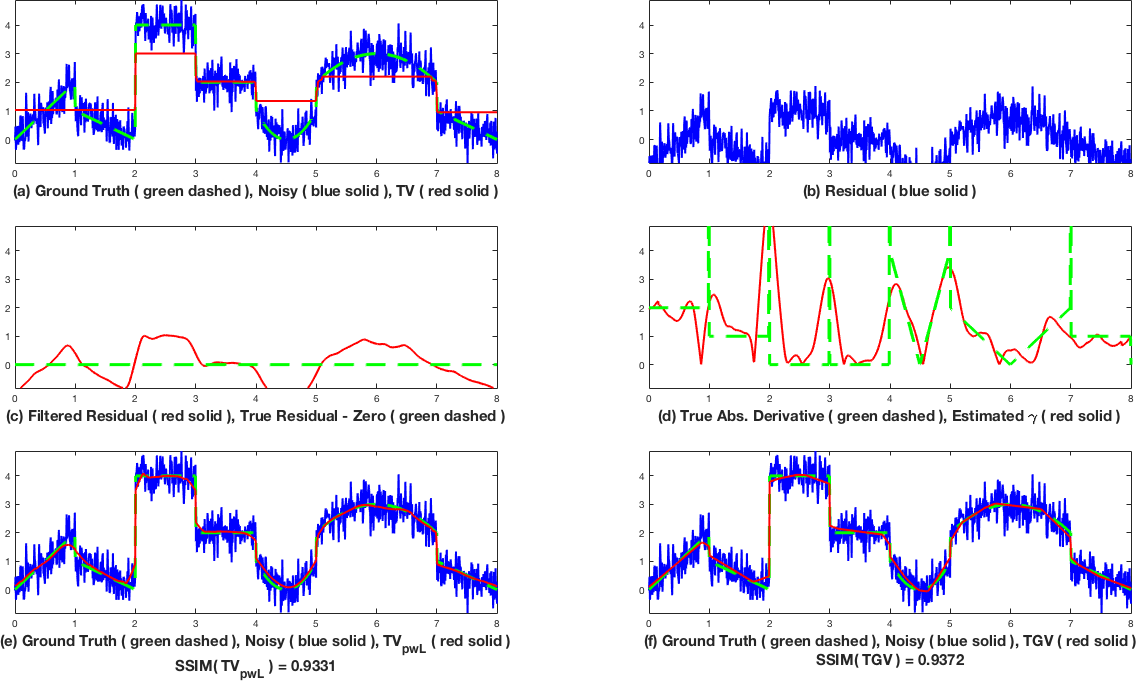}
\caption{The pipeline for the reconstruction with $\TVpwL$ (a-e). An overregularized $\TV$ reconstruction is used to detect and partially eliminate the jumps (a). The residual (b) is filtered (c) and numerically differentiated (d). The absolute value of the obtained derivative is used as the parameter $\gamma$ (d) for $\TVpwL^\gamma$. The reconstruction using $\TVpwL^\gamma$ (e) follows well the structure of the ground truth apart from a small artefact at around $2$. $\TGV$ also yileds a good reconstruction (f), although it tends to approximate the solution with a piecewise-affine function in areas where the ground truth is not affine (e.g., between $4$ and $5$). Both regularizers yield similar (high) values of SSIM.}
\label{fig:1D}
\end{figure}

To use $\TVpwL$, we need to estimate the derivative of the true signal away from the jumps. Therefore, we need to detect the jumps, but leave the signal  intact away from them (up to a constant shift). This is exactly what happens if the image is overregularized with $\TV$. We compute a $\TV$ reconstruction by solving the ROF model 
\begin{eqnarray}\label{eq:num_ROF}
\min_{u \in \R^N}\frac{1}{2}\norm{u-f}^2_2 + \alpha \TV(u)
\end{eqnarray}
\noindent with a large value of $\alpha$ (in this example we took $\alpha = 0.5$). The result is shown in Figure~\ref{fig:1D}a (red solid line). The residual, which we want to use to estimate the derivative of the ground truth, \pagebreak is shown in Figure~\ref{fig:1D}b. Although the jumps have not been removed entirely, this signal can be used to estimate the derivative using filtering. \looseness=-1

The filtered residual (we used the build-in MATLAB function 'smooth' with option 'rlowess' (robust weighted linear least squares) and $\omega = 50$) is shown in 
Figure~\ref{fig:1D}c. This signal is sufficiently smooth to be differentiated. We use central differences; to suppress the remaining noise in the filtered residual we use a step size for differentiation that is $20$ times the original step size. The result is shown in Figure~\ref{fig:1D}d (reg solid line) along with the true derivative (green dashed line). We use the absolute value of the so computed derivative as the parameter $\gamma$.\looseness=-1

The reconstruction obtained using $\TVpwL^\gamma$ is shown in  Figure~\ref{fig:1D}e. We see that the reconstruction is best in areas where our estimate of the true derivative was most faithful (e.g., between $4$ and $5$). But also in other areas the reconstruction is good and preserves the structure of the ground truth rather well. We notice a small artefact at the value of the argument of around $2$; examining the estimate of the derivative in Figure~\ref{fig:1D}d and the residual in Figure~\ref{fig:1D}b, we notice that $\TV$ was not able the remove the jump at this location and therefore the estimate of the derivative was too large. This allowed the reconstruction to get too close to the data at this point.

\hl{We also notice that the jumps are sometimes reduced, with a characteristic linear cut near the jump (e.g., near $x=1;2;3$ and $4$). This can have different reasons.  For the jumps near $x=3$ and $4$ we see that the estimate of $\gamma$ is too large (the true derivative is zero), which allows the regulariser to cut the edges.  For the jumps near $x=1$ and $2$ the situation is different. At these positions a negative slope in the ground truth is followed by a positive jump. Since $\gamma$ only constraints the absolute value of the gradient, even with a correct estimate of $\gamma$ the regulariser will reduce the jump, going with the maximum slope in the direction of the jump. Functions with a negative slope followed by a positive jump are also problematic for $\TV$, since they do not satisfy the source condition (their subdifferential is empty}~\cite{Bungert_Burger_Chambolle_Novaga_19}). \hl{In such cases $\TV$ will also always reduce the jump. }  \looseness=-1

Figure~\ref{fig:1D}e shows the reconstruction obtained with $\TGV$. The reconstruction is quite good, although it is often piecewise-affine where the ground truth is not, e.g. between $4$ and $5$ or between $5$ and $7$. As expected, both regularizers tend to push the reconstructions towards their kernels, but, since $\TVpwL^\gamma$ with a good choice of $\gamma$ contains the ground truth in its kernel (up to the jumps), it yields reconstructions that are more faithful to the structure of the ground truth.

\subsection{2D experiments.}
In this Section we study the performance of $\TVpwL$ in denoising of 2D images. We use two images - ``cameraman'' (Figure~\ref{fig:cameraman}a) and ``owl'' (Figure~\ref{fig:owl}a). Both images  have the resolution $256 \times 256$ pixels and  values in the interval $[0,255]$. The images are corrupted with Gaussian noise with standard deviation $\sigma = 0.1 \cdot 255 = 25.5$ (Figures~\ref{fig:cameraman}b and~\ref{fig:owl}b).

\begin{figure}[t]
\centering
\includegraphics[width=0.95\textwidth]{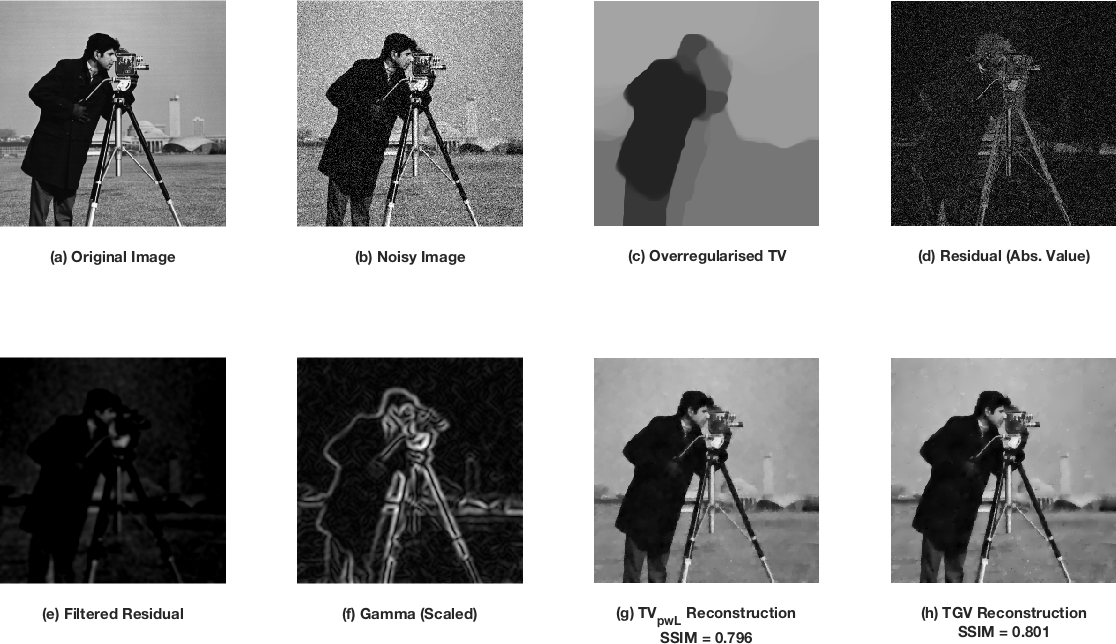}
\caption{The pipeline for the reconstruction of the ``cameraman'' image with $\TVpwL$ (c-g). An overregularized $\TV$ reconstruction is used to detect and partially eliminate the jumps (c). (Note that $\TV$ managed to segment the picture into piecewise-constant regions rather well.) The residual (d) is filtered (e) and numerically differentiated. The norm of the obtained gradient is used as the parameter $\gamma$ for $\TVpwL^\gamma$ ((f), $\gamma$ is scaled to the interval $[0,255]$ for presentation purposes). The reconstructions using $\TVpwL^\gamma$ (g) and $\TGV$ (h) are almost identical. Both preserve edges and are rather smooth away from them. Details are rather well preserved (see, e.g., the pillars of the building in the background as well as the face of the cameraman; the texture of the grass is lost in both cases, however). Relatively homogeneous regions in the original image are also relatively homogeneous in the reconstruction, yet they are not piecewise constant. SSIM values differ very little.}
\label{fig:cameraman}
\end{figure}

\begin{figure}[t!]
\centering
\includegraphics[width=0.95\textwidth]{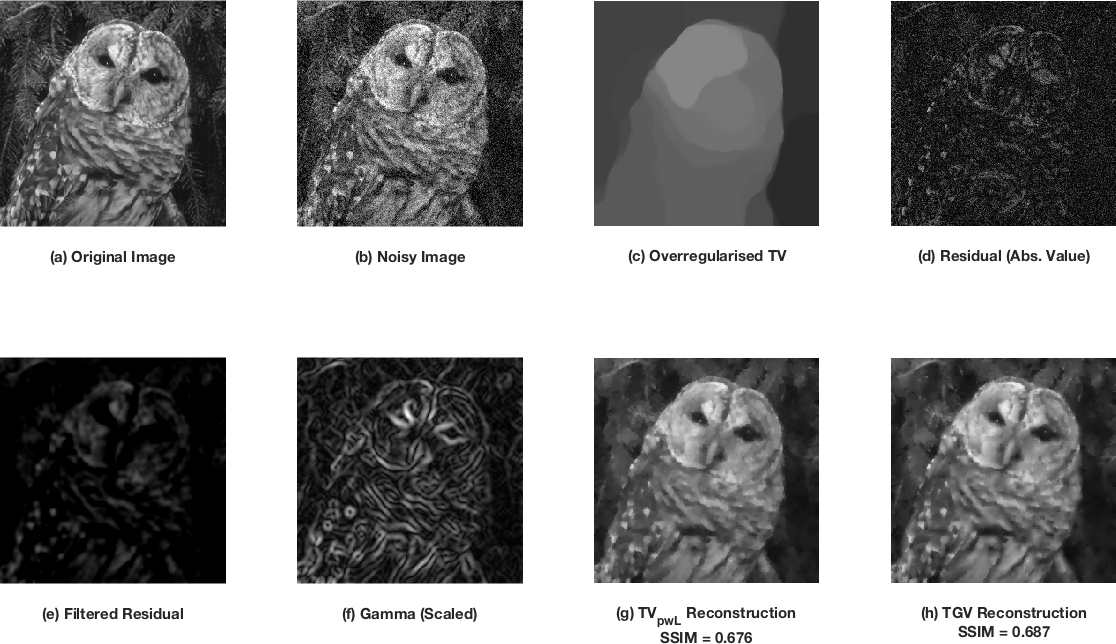}
\caption{The pipeline for the reconstruction of the ``owl'' image with $\TVpwL$ (c-g). An overregularized $\TV$ reconstruction is used to detect and partially eliminate the jumps (c). (This time the segmentation obtained by $\TV$ is not perfect -- perhaps too detailed -- but still rather good.) The residual (d) is filtered (e) and numerically differentiated. The norm of the obtained gradient is used as the parameter $\gamma$ for $\TVpwL^\gamma$ ((f), $\gamma$ is scaled to the interval $[0,255]$ for presentation purposes). This time the residual is not as clear as in the ``cameraman'' example and the estimated $\gamma$ seems noisier. However, it still mainly follows the structure of the original image. The reconstruction using $\TVpwL^\gamma$ (g) preserves the edges and well reconstructs some details in the image, e.g., the feathers of the owl. Other details, however, are lost (the needles of the pine tree in the background). Looking at $\gamma$ in this region, we notice that it is rather irregular and does not capture the structure of the ground truth.
The $\TGV$ reconstruction (h) is again very similar to $\TVpwL$ and  SSIM values are very close.}
\label{fig:owl}
\end{figure}

The pipeline for the reconstruction using $\TVpwL$ is the same as in 1D. We obtain a piecewise constant image by solving an overregularized ROF problem~\eqref{eq:num_ROF} with $\alpha = 500$ (Figures~\ref{fig:cameraman}c and~\ref{fig:owl}c) and compute the residuals (Figures~\ref{fig:cameraman}d and~\ref{fig:owl}d). Then we smooth the residuals using a Gauss filter with $\sigma = 2$ (Figures~\ref{fig:cameraman}e and~\ref{fig:owl}e) and compute its derivatives in the $x$- and $y$-directions using the same approach as in 1D (central differences with a different step size; we used step size $3$ in this example). These derivatives are used to estimate $\gamma$, which is set equal to the norm of the gradient. Figures~\ref{fig:cameraman}f and~\ref{fig:owl}f show $\gamma$ scaled to the interval $[0,255]$ for better visibility. We use the same parameters (for Gaussian filtering and numerical differentiation) to estimate $\gamma$ in both images. Reconstructions obtained using $\TVpwL^\gamma$ and $\TGV$ are shown in Figures~\ref{fig:cameraman}g-h and~\ref{fig:owl}g-h. As in the 1D example, the parameter $\beta$ for $\TGV$ was set to $1.25$.

Comparing the results for both images, we notice that the residual (as well as its filtered version) captures the details in the ``cameraman'' image much better than in the ``owl'' image. The filtered residual in the ``owl'' image seems to miss some of the structure of the original image and this is reflected in the estimated $\gamma$ (which looks much noisier in the ``owl'' image and, in particular, does not capture the structure of the needles of the pine tree in the upper left corner). This might be due to the segmentation achieved by $\TV$, which seems better in the ``cameraman'' image (the one in the ``owl'' image seems to a bit too detailed). This effect might be mitigated by using a better segmentation technique.

This difference is reflected in the reconstructions.  While in the ``cameraman'' image the details are well preserved (e.g., the face of the cameraman or his camera,  as well as the details of the background), in the ``owl'' image part of them are lost and replaced by rather blurry (if not constant) regions; however, in other regions, such as the feathers of the owl, the details are preserved much better, which can be also seen from the estimated $\gamma$ that is much more regular in this area and closer to the structure of the ground truth.
We also notice some loss of contrast in the $\TVpwL$ reconstruction. Perhaps, it could be dealt with by adopting the concept of debiasing~\cite{Deladelle1, Burger_Rasch_debiasing} in the setting of $\TVpwL$, however, it is not clear yet, what is the structure of the model manifold in this case.

The $\TGV$ reconstructions look strikingly similar to those obtained by $\TVpwL$. Structural similarity between these reconstructions (i.e. SSIM computed using on of them as the reference) is $0.98$ for the ``cameraman'' image and $0.97$ for the ``owl'' image. Although the $\TGV$ reconstructions depend on the parameter $\beta$ any may differ more from $\TVpwL$ for other values of $\beta$, the one we chose here ($\beta = 1.25$) is reasonable and lies within the optimal range reported in~\cite{DlosR_CS_TV_bilevel:2018}.\looseness=-1

There are two main messages to be taken from these experiments.  The first one is that $\TVpwL$ is able to almost reproduce the reconstructions obtained  
using $\TGV$ with a reasonable choice of the parameter $\beta$, which is a very good performance for a method that does not use higher-order derivatives. 
The second one is that the performance of $\TVpwL$ greatly depends on the quality of the estimate of $\gamma$. When we are able to well capture the structure of the original image in this estimate, the structure of the reconstructions is rather close to that of the ground truth. To further illustrate this point, we show in Fig.~\ref{fig:exact_gamma} $\TVpwL^\gamma$ reconstructions in the ideal scenario when $\gamma$ is estimated from the ground truth as the local magnitude of the gradient.  The quality of the reconstructions is very good, suggesting that with a better strategy of estimating the gradient $\TVpwL^\gamma$ could achieve even better performance.

\begin{figure}[t]
\centering
\includegraphics[width=0.6\textwidth]{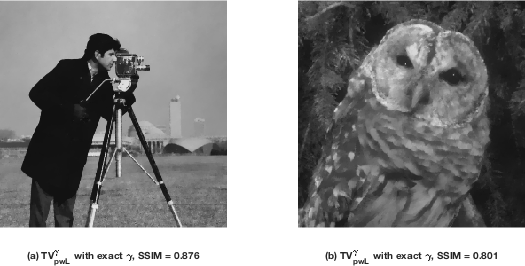}
\caption{In the ideal scenario when $\gamma$ is estimated from the ground truth, $\TVpwL^\gamma$ is able to reproduce the original image almost perfectly}
\label{fig:exact_gamma}
\end{figure}

\section{Conclusions}
We proposed a new $\TV$-type regularizer that can be used to decompose the image into a jump part and a part with Lipschitz continuous gradient (with a given Lipschitz constant that is also allowed to vary spatially). Functions whose gradient does not exceed this constant lie in the kernel of the regularizer and are not penalized. By smartly choosing this bound we can hope to put the ground truth into the kernel (up to the jumps) and thus not penalize any structure that is present in the ground truth.


In this paper we presented, in the context of denoising, an approach to estimating this bound from the noisy image. The approach is based on segmenting the image (and compensating for the jumps) using overregularized $\TV$ and estimating the local bound on the gradient using filtering. 
Our numerical experiments showed that $\TVpwL$ can produce reconstructions that are very similar to $\TGV$, however, the results significantly depend on the quality of the estimation of the local bound on the gradient. Using a more sophisticated estimation technique is expected to further improve the reconstructions. 
The ideal application for $\TVpwL$ would be one where there is some information about the magnitude of the Lipschitz part of the gradient. 



\section*{Acknowledgments}
This work was supported by the European Union's Horizon 2020 research and innovation programme under the Marie Sklodowska-Curie grant agreement No 777826 (NoMADS).  MB acknowledges further support by ERC via Grant EU FP7 -- ERC Consolidator Grant 615216 LifeInverse.
YK acknowledges  support of the Royal Society through a Newton International Fellowship. YK also acknowledges  support of the Humbold Foundataion through a Humbold Fellowship he held at the University of M\"unster when this work was initiated.
CBS acknowledges support from the Leverhulme Trust project on Breaking the non-convexity barrier, EPSRC grant Nr. EP/M00483X/1, the EPSRC Centre Nr. EP/N014588/1, the RISE projects CHiPS and NoMADS, the Cantab Capital Institute for the Mathematics of Information and the Alan Turing Institute. We gratefully acknowledge the support of NVIDIA Corporation with the donation of a Quadro P6000 and a Titan Xp GPUs used for this research.

%
%

\bibliographystyle{splncs04}
\bibliography{abbrevs.bib,BL.bib,IP.bib}

\end{document}